\newtheorem{theorem}{Theorem}[section]
\newtheorem{lemma}[theorem]{Lemma}
\newtheorem{corollary}[theorem]{Corollary}
\newtheorem{proposition}[theorem]{Proposition}
\newtheorem{fact}[theorem]{Fact}
\theoremstyle{definition}
\newtheorem{definition}[theorem]{Definition}
\newtheorem{remark}[theorem]{Remark}
\newtheorem*{claim*}{Claim}
\newcommand{\Z}{{\mathbb Z}}
\newcommand{\N}{{\mathbb N}}
\newcommand{\R}{{\mathbb R}}
\def\kal#1{\mathcal{K}_{#1}}   
\def\lkal#1{\ell\kal{#1}} 
\def\hull#1{\langle{#1}\rangle} 
\def\span#1{\operatorname{span}({#1})}
\def\linespan#1{\operatorname{span}(#1)}
\title{Topological groups with invariant linear spans}
\author[E. Perneck\'a]{Eva Perneck\'a}
\address[Eva Perneck\'a]{Department of Applied Mathematics\\ Faculty of Information Technology\\ Czech Technical University in Prague\\ Th\'akurova 9, 16000, Prague 6\\ Czech Republic} \email{perneeva@fit.cvut.cz}
\author[J. Sp\v{e}v\'ak]{Jan Sp\v{e}v\'ak}
\address[Jan Sp\v{e}v\'ak]{Department of Applied Mathematics\\ Faculty of Information Technology\\ Czech Technical University in Prague\\ Th\'akurova 9, 16000, Prague 6\\ Czech Republic} \email{spevajan@fit.cvut.cz}
\thanks{The first author was supported by the grant GA\v CR 18-00960Y of the Czech Science Foundation.}
\begin{document}

\begin{abstract}
Given a topological group $G$ that can be embedded as a topological subgroup into some topological vector space (over the field of reals) we say that $G$ has { \it invariant linear span} if all linear spans of $G$ under arbitrary embeddings into topological vector spaces are isomorphic as topological vector spaces.

For an arbitrary set $A$ let $\Z^{(A)}$ be the direct sum of $|A|$-many copies of the discrete group of integers endowed with the Tychonoff product topology.
We show that the topological group  $\Z^{(A)}$ has invariant linear span. This answers a question from {\cite{DSS}} in positive. 

We prove that given a non-discrete sequential space $X$, the free abelian topological group $A(X)$  over $X$  is an example of a topological group that embeds into a topological vector space but does not have invariant linear span.

\end{abstract}

\maketitle

\def\rank#1{trank(#1)}

\bigskip

{All vector spaces in this paper are considered over the field $\R$ of real numbers and all topological spaces are assumed to be Hausdorff. For an arbitrary non-empty set $A$ and a topological group $G$ with addition and neutral element $0_G$ let $G^A$ be the topological group given by the direct product $\Pi_{a\in A}G$ with coordinate-wise addition and the Tychonoff product topology. We denote $G^{(A)}$ the topological subgroup of $G^A$, with inherited topology, consisting of those elements $(g_a)_{a\in A}$ for which the set $\{a\in A\colon g_a\neq 0_G\}$ is finite. Given a subset $H$ of a group $G$ and a subset $M$ of a vector space $L$, we use the standard notation $\hull{H}$ to denote the subgroup of $G$ generated by $H$ and $\span{M}$ for the vector subspace of $L$ generated by $M$. For simplicity we write $\hull{g}$ rather than $\hull{\{g\}}$ for any $g\in G$ and, similarly, $\linespan{l}$ instead of $\span{\{l\}}$ for any $l\in L$.}

\section{Introduction}
{
In this note we study which topological groups enjoy the property stated in the following definition.
}

\begin{definition}
Let $G$ be a topological group that can be embedded (as a topological subgroup) into some topological vector space. We say that $G$ has  {\it invariant linear span} provided that all linear spans of $G$ under arbitrary {embeddings into topological vector spaces} are 
isomorphic as topological vector spaces.
\end{definition}
A simple example of topological group with an invariant linear span is every topological vector space. Indeed, as was observed by Tkachuk in \cite{Tkachuk}, given arbitrary topological vector spaces $L$ and $E$ and  a continuous group homomorphism  $h:L\to E$, the homomorphism $h$ is automatically linear. This observation further yields that if $L$ is embedded in $E$ as a topological subgroup, the same embedding is already an embedding of topological vector spaces. In particular, the linear span of $L$ in $E$ is (isomorphic to) the topological vector space $L$ again and hence the linear span of $L$ does not depend on the space $E$ in which $L$ embeds. Yet another simple example of a topological group with an invariant linear span is the discrete topological group $\Z$ of integers. Its linear span is obviously (isomorphic to) the topological vector space $\R$.

In our paper we show that for an arbitrary non-empty set $A$ the {group $\Z^{(A)}$} has invariant linear span (which is isomorphic to $\R^{(A)}$). See Theorem \ref{lkal:kal:both:embed} and Corollary \ref{cor:main}. This answers  \cite[Question 10.6]{DSS} in positive and generalizes the folklore fact that all topological vector spaces of the same finite dimension are isomorphic (see Remark \ref{remark}).

The proof of Theorem \ref{lkal:kal:both:embed} consists of two steps. The first was done in \cite[Proposition 10.1]{DSS} by showing that given an injective linear map $l:\R^{(A)}\to L$, where $L$ is a topological vector space, the continuity of $l$ follows from the continuity of the restriction of $l$ to $\Z^{(A)}$. The second step is done in Theorem \ref{thm}, where we basically show, that if the restriction of $l$ to $\Z^{(A)}$ is an embedding of topological groups, then $l$ is open. The proof of Theorem \ref{thm} is based on a Diophantine approximation done in Lemma \ref{lemma:Kronecker} which resembles the classical Kronecker's approximation theorem.

We end the paper with Theorem \ref{proposition:tkachenko:Morris:Gabriyelyan}, which shows that for an arbitrary non-discrete sequential space $X$ the free topological abelian group $A(X)$ does not have invariant linear span, as it canonically embeds in both the free topological vector space $V(X)$ and  the free locally convex topological vector space $L(X)$, and the linear spans of $A(X)$ in the latter spaces are the non-isomorphic topological vector spaces $V(X)$ and $L(X)$. This theorem is based on non-trivial results of Tkachenko \cite{Tka} and Gabriyelyan and Morris \cite{GM}.

\section{The main technical theorem}

{We begin the section by two auxiliary observations.}

\begin{lemma}\label{Kronecker1}
For every neighbourhood $V$ of zero in a compact group $G$  and every $t\in G$ there is $m\in\N\setminus\{0\}$ such that 
\begin{equation}\label{Lt:in:V}
mt\in V.    
\end{equation}

\end{lemma}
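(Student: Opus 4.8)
The plan is to combine compactness with a pigeonhole argument applied to the multiples $t, 2t, 3t,\dots$ of the fixed element $t$, exploiting the additive (abelian) convention of the paper so that a difference of two multiples of $t$ is again a multiple of $t$.

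First I would replace $V$ by a better-behaved neighbourhood. Using the continuity of the group operations I choose a \emph{symmetric} open neighbourhood $W$ of $0_G$ (that is, $W=-W$) with $W+W\subseteq V$; such a $W$ exists because $(x,y)\mapsto x+y$ is continuous at $(0_G,0_G)$, and one obtains symmetry by intersecting a suitable neighbourhood with its negative. Next I would invoke compactness of $G$: the family $\{g+W : g\in G\}$ is an open cover, so there are finitely many points $g_1,\dots,g_k\in G$ with $G=\bigcup_{i=1}^{k}(g_i+W)$.

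Now I consider the infinitely many elements $t,2t,3t,\dots$ of $G$. Since each of them lies in one of the $k$ translates $g_i+W$, the pigeonhole principle produces indices $p<q$ and an index $i$ with $pt\in g_i+W$ and $qt\in g_i+W$. Writing $pt=g_i+w_1$ and $qt=g_i+w_2$ with $w_1,w_2\in W$, subtraction gives $(q-p)t=qt-pt=w_2-w_1\in W-W=W+W\subseteq V$, where the identity $W-W=W+W$ uses the symmetry of $W$. Setting $m=q-p\in\N\setminus\{0\}$ then yields $mt\in V$, which is exactly \eqref{Lt:in:V}.

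I expect the whole argument to be essentially routine; the only points requiring care are the construction of the symmetric neighbourhood $W$ with $W+W\subseteq V$ and the bookkeeping ensuring $m=q-p$ is a strictly positive integer. The crucial structural input is that $G$ is abelian, so that the difference of two multiples of $t$ is again a multiple of $t$; in the additive notation adopted by the paper this is automatic, and it is precisely what converts the pigeonhole collision $pt,qt\in g_i+W$ into the desired statement about a single multiple $mt$.
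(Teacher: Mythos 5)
Your proof is correct, and it takes a genuinely different route from the paper. The paper argues by a case distinction on the cyclic subgroup $\langle t\rangle$: if it is discrete, it is closed in the compact group $G$, hence compact and finite, and one takes $m$ to be its order; if it is not discrete, then every neighbourhood of zero contains infinitely many of its elements, from which a positive multiple of $t$ in $V$ is extracted. Your argument instead runs the classical recurrence/pigeonhole scheme: cover $G$ by finitely many translates $g_i+W$ of a symmetric neighbourhood with $W+W\subseteq V$, find $p<q$ with $pt,qt$ in the same translate, and conclude $(q-p)t\in W+W\subseteq V$. Your approach buys several things: it avoids the structural facts that discrete subgroups of Hausdorff groups are closed and that compact discrete groups are finite; it needs no case distinction; it handles the sign issue cleanly (the paper's non-discrete case silently assumes that among the infinitely many elements $zt\in V$ one can take $z>0$, which strictly speaking needs the symmetric-neighbourhood trick you use); and it only uses compactness through total boundedness, so your proof actually establishes the lemma for all precompact groups. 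One small correction: the abelian hypothesis is not really the crucial input you claim it is --- the identity $qt-pt=(q-p)t$ only involves powers of the single element $t$, which commute with each other in any group, so your argument works verbatim in non-abelian compact groups as well.
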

\begin{proof}
Pick a neighbourhood $V$ of zero in $G$ and $t\in G$ arbitrarily. There are two possibilities. If  $\hull{t}$ is a discrete subgroup of $G$ then it is closed and, consequently, compact and therefore finite. Let $m$ be the order of  $\hull{t}$ and observe that \eqref{Lt:in:V} holds. The second possibility is, that  $\hull{t}$ is not discrete. Then every neighbourhood of zero (and $V$ in particular) contains infinitely many elements of $\hull{t}$. Since $t$ is a generator of $\hull{t}$ there is $m\in\N\setminus\{0\}$ satisfying \eqref{Lt:in:V}.
\end{proof}

\begin{lemma}\label{lemma:Kronecker}
Let $(t_n)_{n\in\N}$ be a sequence in $\R^F$, where $F$ is a finite set. For every neighborhood $O$ of zero in $\R^F$ there is $m\in\N\setminus\{0\}$ and a sequence $(z_n)_{n\in\N}\subset\Z^F$ such that 
\begin{equation}\label{eq:kronecker}
    mt_n-z_n\in O
\end{equation}
holds for infinitely many $n\in\N$.

\end{lemma}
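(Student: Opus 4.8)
The plan is to reduce the statement to the single-point result of Lemma~\ref{Kronecker1} by passing to the compact quotient group $\T^F=(\R/\Z)^F$. Let $q\colon\R^F\to\T^F$ be the canonical quotient homomorphism; it is continuous, open, and surjective. Since $F$ is finite, $\T=\R/\Z$ is a compact metrizable group and hence so is $\T^F$, which is therefore sequentially compact. The key observation is that the required condition is governed entirely by this quotient: for a fixed $m$ and index $n$, there exists $z_n\in\Z^F$ with $mt_n-z_n\in O$ if and only if $q(mt_n)=m\,q(t_n)\in q(O)$. Indeed, if $m\,q(t_n)\in q(O)$ then $q(mt_n)=q(o)$ for some $o\in O$, so $z_n:=mt_n-o\in\Z^F$ and $mt_n-z_n=o\in O$; the converse is immediate as $q$ is a homomorphism. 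Thus it suffices to produce a single $m\in\N\setminus\{0\}$ with $m\,q(t_n)\in q(O)$ for infinitely many $n$.

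First I would extract a convergent subsequence: by sequential compactness of $\T^F$ the sequence $(q(t_n))_{n\in\N}$ has a subsequence $(q(t_{n_k}))_{k\in\N}$ converging to some $s\in\T^F$. Since $q$ is open, $q(O)$ is a neighborhood of zero in $\T^F$, and by continuity of addition there is an open symmetric neighborhood $W$ of zero with $W+W\subseteq q(O)$. Next I would apply Lemma~\ref{Kronecker1} to the compact group $\T^F$, the point $s$, and the neighborhood $W$, obtaining $m\in\N\setminus\{0\}$ with $ms\in W$. The map $x\mapsto mx$ on $\T^F$ is continuous, so $m\,q(t_{n_k})\to ms$, whence $m\,q(t_{n_k})-ms\in W$ for all sufficiently large $k$. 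For such $k$ we get $m\,q(t_{n_k})=\bigl(m\,q(t_{n_k})-ms\bigr)+ms\in W+W\subseteq q(O)$. By the equivalence above, for each such $k$ there is $z_{n_k}\in\Z^F$ with $mt_{n_k}-z_{n_k}\in O$; setting $z_n=0$ for all remaining indices yields a sequence $(z_n)_{n\in\N}\subset\Z^F$ for which \eqref{eq:kronecker} holds for the infinitely many $n$ of the form $n_k$ with $k$ large.

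The main obstacle is precisely the uniformity in $m$: Lemma~\ref{Kronecker1} returns a multiplier for one point at a time, whereas here a single $m$ must serve infinitely many terms simultaneously. This is overcome by compactness — collapsing the sequence to a single limit point $s$ via a convergent subsequence — combined with the continuity of multiplication by the fixed integer $m$, which transfers the inclusion $ms\in W$ to $m\,q(t_{n_k})\in W+W$ for all nearby terms of the subsequence. The halving $W+W\subseteq q(O)$ is exactly what absorbs the approximation error $m\,q(t_{n_k})-ms$ coming from the convergence.
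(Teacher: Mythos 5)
Your proof is correct and follows essentially the same route as the paper's: pass to the compact quotient $(\R/\Z)^F$, extract a convergent subsequence by sequential compactness, apply Lemma~\ref{Kronecker1} to the limit point with a halved neighbourhood $V+V\subset q(O)$, and use continuity of multiplication by $m$ to handle infinitely many terms at once. The only differences are cosmetic — you make explicit the lifting equivalence $m\,q(t_n)\in q(O)\Leftrightarrow\exists z_n\in\Z^F\colon mt_n-z_n\in O$, which the paper leaves implicit.
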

\begin{proof}
Fix $O$, a neighbourhood of zero in $\R^F$, arbitrarily, and let $q:\R^F\to(\R/\Z)^F$ be the quotient map. Since $(\R/\Z)^F$ is sequentially compact, the sequence $(q(t_n))_{n\in\N}$ has a convergent subsequence with a limit $t$. As $q$ is an open map, we may pick a neighbourhood $V$ of zero in $(\R/\Z)^F$ such that $$V+V\subset q(O).$$ By Lemma \ref{Kronecker1}, there is a positive integer $m$ satisfying \eqref{Lt:in:V}.  Observe that the set $M$ defined as $$M:=\{n\in\N:mq(t_n)\in mt+V\}$$ is infinite, and for every $n\in M$ we have $$q(mt_n)=mq(t_n)\in mt+V\subset V+V\subset q(O). $$ Thus for every $n\in M$ there is $z_n\in\Z^F$ such that \eqref{eq:kronecker} holds. 
\end{proof}

In order to {formulate} the main technical result of this paper we need to recall three notions. Their importance to the topic of our manuscript will become clear from Proposition \ref{prop:abs:cauch:+:semibasic=embedding} and from the proof of Theorem \ref{lkal:kal:both:embed}.

We say that a subset $A$ of {a} topological vector space $L$ is 
 \begin{itemize}
     \item
          {\em absolutely Cauchy summable\/} provided that for every neighbourhood $V$ of $0_L$ 
          there exists a finite set $F{\subset} A$ such that    
             \begin{equation}\label{eq5}
             {\span{A\setminus F}}\subset V;
              \end{equation}
     \item
          {\em topologically independent} if $0_L\not\in A$ and  for every neighbourhood $W$ of $0_L$ 
          there exists {a} neighbourhood $U$ of {$0_L$} such that for every finite subset $F\subset A$ and every indexed set $\{z_a:a\in F\}$ of integers the inclusion $\sum_{a\in F}z_aa\in U$ implies that $z_aa\in W$ for all $a\in F$. We call this neighbourhood $U$ a {\em $W$-witness\/} of the topological independence of $A$; 
     \item
           {\em semi-basic} if for all $a\in A$ we have
 \begin{equation}\label{eq:sembasic}
 a\not\in{\overline{\span{A\setminus\{a\}}}}.
 \end{equation}

 \end{itemize}

\begin{remark}
 In \cite[Definition 3.1]{DSS} the notion of an absolutely Cauchy summable set was introduced in an arbitrary abelian topological group. In topological vector spaces it is equivalent to our definition by \cite[Proposition 9.2 (i)]{DSS}.
 
 {\em Topologically independent sets} were introduced in \cite[Definition 4.1]{DSS} in an arbitrary abelian topological group. For further properties of these sets in precompact groups we refer to \cite{Spe}. 
  
  We have {adopted} the name {\em semi-basic} from \cite{kal2}, where a semi-basic sequence in an $F$-space was introduced. In \cite{Haz} a semi-basic set is called {\em topologically free}.  Semi-basic sequences in Banach spaces are called {\em minimal} in \cite{H} and \cite[Definition 6.1]{Singer}.
\end{remark}

Now we are ready to state the main technical theorem of this note.
\begin{theorem}
\label{thm}
If $A$ is a topologically independent and absolutely Cauchy summable subset of a topological vector space $L$, then $A$ is semi-basic.
\end{theorem}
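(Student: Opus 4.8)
The plan is to argue by contradiction. Suppose $A$ is not semi-basic, so that $a_0\in\overline{\span{A\setminus\{a_0\}}}$ for some $a_0\in A$, and aim to contradict topological independence. Since $0_L\notin A$ we have $a_0\neq 0_L$, so using that $L$ has a base of balanced neighbourhoods of zero I first fix a \emph{balanced} neighbourhood $W$ of $0_L$ with $a_0\notin W$. The point of insisting that $W$ be balanced is a clean endgame: if I can eventually produce an integer $m\geq 1$, a finite set $F\subset A$ containing $a_0$, and integers $(z_a)_{a\in F}$ with $z_{a_0}=m$ and $\sum_{a\in F}z_a a$ lying in a $W$-witness $U$ of the topological independence of $A$, then topological independence forces $ma_0=z_{a_0}a_0\in W$, whence $a_0=\tfrac1m(ma_0)\in W$ because $W$ is balanced and $\tfrac1m\leq 1$, contradicting $a_0\notin W$. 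Thus the entire difficulty is reduced to manufacturing one such integer combination.

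Next I would use absolute Cauchy summability to collapse the problem onto a fixed finite-dimensional subspace. I take a $W$-witness $U$, which I may assume balanced (any smaller neighbourhood of a witness is again a witness), and a balanced $U_0$ with $U_0+U_0+U_0\subset U$. By absolute Cauchy summability there is a finite $F\subset A$, which I enlarge so that $a_0\in F$, with $\span{A\setminus F}\subset U_0$; writing $F\setminus\{a_0\}=\{a_1,\dots,a_k\}$ I consider the continuous map $\phi\colon\R^k\to L$, $\phi(x)=\sum_i x_i a_i$. Every $s\in\span{A\setminus\{a_0\}}$ then splits as $s=p+q$ with $p\in\span{\{a_1,\dots,a_k\}}$ and $q\in\span{A\setminus F}\subset U_0$. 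The decisive structural observation is that the tail term lives in the linear \emph{subspace} $\span{A\setminus F}$, which is invariant under scalar multiplication, so however large an integer $m$ I later multiply by, $mq$ still lies in $U_0$.

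With this in hand I set up the Diophantine step. I construct a decreasing sequence of balanced neighbourhoods $O_1\supset O_2\supset\cdots$ of $0_L$ with $O_1\subset U_0$ and $O_{n+1}+O_{n+1}\subset O_n$; note that I build this sequence myself, so $L$ need not be metrizable and the $O_n$ need not form a base. For each $n$, since $a_0\in\overline{\span{A\setminus\{a_0\}}}$ the neighbourhood $a_0+O_n$ meets $\span{A\setminus\{a_0\}}$, yielding $s_n=p_n+q_n$ with $a_0-s_n\in O_n$, $p_n=\phi(t_n)$ for some $t_n\in\R^k$, and $q_n\in\span{A\setminus F}$. I then apply Lemma \ref{lemma:Kronecker} (whose proof rests on Lemma \ref{Kronecker1}) to the sequence $(t_n)$, with a target neighbourhood $O^*$ of zero in $\R^k$ chosen so small that $\phi(O^*)\subset U_0$; this produces a \emph{single} $m\geq 1$ and integer vectors $z_n$ with $mt_n-z_n\in O^*$ for infinitely many $n$.

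The main obstacle, and precisely the reason the sequence version of Kronecker is needed rather than a single rounding, is controlling the term $m(a_0-s_n)\in mO_n$: the integer $m$ coming out of the approximation can be arbitrarily large, so no single fixed approximation can absorb it. The sequence circumvents this, since $m$ is pinned down first and works for infinitely many $n$, after which I am free to pick $n$ so large that $mO_n$ becomes tiny. Concretely, the telescoping choice gives $mO_n\subset 2^{c}O_n\subset O_{n-c}\subset O_1\subset U_0$ for $c=\lceil\log_2 m\rceil$ and any $n>c$; fixing such an $n$ from the infinite index set and writing $ma_0-\sum_i z_{n,i}a_i=m(a_0-s_n)+\big(m\phi(t_n)-\phi(z_n)\big)+mq_n$, all three summands lie in $U_0$, so the total lies in $U_0+U_0+U_0\subset U$. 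This is exactly the integer combination promised above, with $z_{a_0}=m$ and $z_{a_i}=-z_{n,i}$, and the balanced-$W$ endgame then closes the argument.
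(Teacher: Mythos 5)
Your proof is correct and takes essentially the same approach as the paper's: collapse to a finite set $F$ via absolute Cauchy summability, approximate $a_0$ from $\operatorname{span}(A\setminus\{a_0\})$ splitting off the tail (which, being a linear subspace inside the small neighbourhood, absorbs multiplication by $m$ for free), round the $F$-coordinates simultaneously with Lemma~\ref{lemma:Kronecker}, and finish with the balanced-$W$ endgame. The only difference is bookkeeping for the error term: where the paper places the approximations in $\frac{1}{n}V$ and then picks $n\geq m$, you build a telescoping chain $O_{n+1}+O_{n+1}\subset O_n$ and pick $n>\lceil\log_2 m\rceil$ --- the same absorption trick in dyadic form.
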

\begin{proof}
To prove the contrapositive, assume that there is $a\in A$ with
 \begin{equation}\label{eq:a:in:rhull}
a\in{\overline{\span{A\setminus\{a\}}}},   
 \end{equation}
 and let $A$ be absolutely Cauchy summable. We will show that $A$ is not topologically independent.
 
If $a=0_L$, then we are done. Otherwise 
 we can find a neighbourhood $W$ of $0_L$ such that $za\notin W$ for every $z\in\Z\setminus\{0\}$. Pick an arbitrary neighborhood $U$ of $0_L$.
Let us show that $U$ is not a $W$-witness of topological independence of $A$.  

Fix a balanced neighborhood $V$ of $0_L$ with $V+V+V\subset U$. Since $A$ is absolutely Cauchy summable, there is a finite $F\subset A\setminus\{a\}$ such that ${\span{A\setminus (F\cup\{a\})}}\subset V.$ In particular, for every finite $B\subset A\setminus\{a\}$, reals $(s_b)_{b\in B\setminus F}$ and each $n\in\N$ we have 
\begin{equation}\label{eq:in:nth:v1}
\sum_{b\in B\setminus F} s_bb\in\frac{1}{n}V.
\end{equation}

Given $n\in\N$ arbitrarily, by \eqref{eq:a:in:rhull} we can fix a finite set $B\subset A\setminus\{a\}$ and an indexed set $(r_b^n)_{b\in B}$ of reals such that 
\begin{equation}\label{eq:in:nth:v2}
a-\sum_{b\in B} r^n_b b\in \frac{1}{n}V.    
\end{equation}

For $b\in F\setminus B$ define $r^n_b=0$, and observe that \eqref{eq:in:nth:v1} and \eqref{eq:in:nth:v2} yield
\begin{equation}
    \label{eq:n_approx}
  a-\sum_{b\in F} r^n_b b=\left(a-\sum_{b\in B} r^n_b b\right)+\left(\sum_{b\in B\setminus F} r^n_b b\right)      \in \frac{1}{n}V+\frac{1}{n}V.  
\end{equation}

By continuity of vector space operations, there is a neighborhood  $O$ of zero in $\R^F$ such that 
\begin{equation}\label{eq:99}
    \sum_{b\in F}s_bb\in V \mbox{ for all } (s_b)_{b\in F}\in O.
\end{equation}
Define a sequence $(t_n)_{n\in\N}$ in $\R^F$ by $t_n=(r^n_b)_{b\in F}$, and let $m\in\N\setminus\{0\}$ and 
$(z_n)_{n\in\N}\subset\Z^F$
be as in the conclusion of Lemma \ref{lemma:Kronecker}. By this lemma, we may fix $n\in\N$ such that $n\geq m$ and \eqref{eq:kronecker} holds. For $b\in F$ let $z_b\in\Z$ be the $b$-th coordinate of $z_n$, and observe that by \eqref{eq:kronecker} and \eqref{eq:99} we have 
$$\sum_{b\in F} (mr^n_b-z_b) b\in V.$$
From this, \eqref{eq:n_approx}, and the fact that $V$ is balanced and $n\ge m$ we get

$$ ma-\sum_{b\in F} z_b b=m\left(a-\sum_{b\in F} r^n_b b\right)+\left(\sum_{b\in F} (mr^n_b-z_b) b\right)      \in \frac{m}{n}V+\frac{m}{n}V+V\subset V+V+V\subset U.   $$ Since $m$ is a non-zero integer and $z_b$ is an integer for each $b\in F$ we conclude that $U$ is not a $W$-witness of the topological independence of $A$, because $ma\not\in W$ by the choice of $W$.
\end{proof}

\section{The invariance of {the} linear span of $\Z^{(A)}$ }

In this section we prove that  the topological group $\Z^{(A)}$ has invariant linear span. In order to do so we need to recall the notion of a (linear) Kalton map introduced in \cite{DSS} which is useful to deal with embedding{s} of $\Z^{(A)}$ and $\R^{(A)}$ into topological vector spaces. 

{
Given a non-empty subset $A$ of a topological vector space $L$ such that $0\not\in A$, we denote
$$\kal{A}:\Z^{(A)}\to L $$
the group homomorphism given by $\kal{A}\left((z_a)_{a\in A}\right)=\sum_{a\in A}z_a a$ for every $(z_a)_{a\in A}\in \Z^{(A)}$.
Similarly,  
$$\lkal{A}:\R^{(A)}\to L$$
is the linear operator between vector spaces defined by $\lkal{A}\left((r_a)_{a\in A}\right)=\sum_{a\in A}r_a a$ for every $(r_a)_{a\in A}\in \R^{(A)}$. As in \cite{DSS} we call $\kal{A}$ ($\lkal{A}$) the (linear) Kalton map associated with $A$. Since the sums in the definitions are finite, the mappings are well-defined and $\kal{A}(\Z^{(A)})=\hull{A}\subset L$ and $\lkal{A}(\R^{(A)})=\span{A}\subset L$. Notice that the (linear) Kalton map is injective if and only if $A$ is (linearly) independent.
}

\begin{fact}[{\cite[Proposition 10.1]{DSS}}]\label{fact:cauchy:summable}
Given a {non-empty} subset $A$ of non-zero elements of a topological vector space, the following statements are equivalent:
\begin{itemize}
    \item [(i)]
    the linear Kalton map $\lkal{A}$ is continuous;
    \item[(ii)]
    the Kalton map $\kal{A}$ is continuous;
    \item[(iii)]
    the set $A$ is absolutely Cauchy summable.
    \end{itemize}
\end{fact}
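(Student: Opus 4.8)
The plan is to prove the three conditions equivalent through the cycle (iii)$\Rightarrow$(i)$\Rightarrow$(ii)$\Rightarrow$(iii), in which the only substantial implication is the last one. The implication (i)$\Rightarrow$(ii) is immediate: $\Z^{(A)}$ is a topological subgroup of $\R^{(A)}$ and $\kal{A}$ is precisely the restriction of $\lkal{A}$ to $\Z^{(A)}$, so continuity of $\lkal{A}$ restricts to continuity of $\kal{A}$. The remaining two directions require unwinding the product topologies on $\R^{(A)}$ and $\Z^{(A)}$ and using the continuity of the vector operations on finite-dimensional subspaces.

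For (iii)$\Rightarrow$(i) I would fix a neighbourhood $V$ of $0_L$ and choose a balanced $V'$ with $V'+V'\subset V$. Absolute Cauchy summability supplies a finite $F\subset A$ with $\span{A\setminus F}\subset V'$. As $F$ is finite, the map $(r_a)_{a\in F}\mapsto\sum_{a\in F}r_a a$ from $\R^F$ to $L$ is continuous, so there is a neighbourhood $O$ of $0$ in $\R^F$ with image inside $V'$. The set $N$ of all $(r_a)\in\R^{(A)}$ whose $F$-coordinates lie in $O$ (the rest unrestricted) is a basic neighbourhood of $0$ for the product topology, and for $(r_a)\in N$ I split $\lkal{A}((r_a))$ into its $F$-part, which lies in $V'$, and the finitely supported remainder, which is a real combination of elements of $A\setminus F$ and hence lies in $\span{A\setminus F}\subset V'$. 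The total lies in $V'+V'\subset V$, giving continuity of $\lkal{A}$ at $0$, whence everywhere by linearity.

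The heart of the argument is (ii)$\Rightarrow$(iii), where integer combinations must be upgraded to arbitrary real combinations. Continuity of the homomorphism $\kal{A}$ at $0$, combined with the discreteness of each factor $\Z$, says exactly that for every neighbourhood $V$ of $0_L$ there is a finite $F\subset A$ with $\hull{A\setminus F}\subset V$. To reach the span version I would fix $V$, choose balanced $V_1,V_2$ with $V_2+V_2\subset V_1$ and $V_1+V_1\subset V$, and take $F$ with $\hull{A\setminus F}\subset V_2$. For a finite real combination $\sum_{a\in B}r_a a$ with $B\subset A\setminus F$, write $r_a=z_a+\theta_a$ with $z_a\in\Z$ and $\theta_a\in[0,1)$; the integer part already lies in $\hull{A\setminus F}\subset V_2$, so it remains to control the fractional part $\sum_{a\in B}\theta_a a$. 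Here I exploit scaling: for a large integer $m$, round each $m\theta_a$ to a nearest integer $w_a$ with error $\eta_a\in[-\tfrac12,\tfrac12]$, so that $\sum_{a\in B}\theta_a a=\tfrac1m\sum_{a\in B}w_a a+\sum_{a\in B}\tfrac{\eta_a}{m}a$. The first term lies in $V_2$ because $\sum w_a a\in\hull{A\setminus F}\subset V_2$, $V_2$ is balanced and $\tfrac1m\le1$; the coefficients $\tfrac{\eta_a}{m}$ of the second term tend to $0$ as $m\to\infty$, so by continuity of the finite-dimensional map on the coordinates of $B$ it too lands in $V_2$ for $m$ large. Thus $\sum_{a\in B}\theta_a a\in V_2+V_2\subset V_1$, and the whole combination lies in $V_2+V_1\subset V_1+V_1\subset V$, establishing $\span{A\setminus F}\subset V$.

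I expect the main obstacle to be precisely this last step: the notion of absolute Cauchy summability used in the statement refers to the full real span, whereas continuity of $\kal{A}$ only ever tests integer coefficients, and the gap is bridged by the rounding-and-rescaling device above — this is essentially the content of the cited \cite[Proposition 9.2(i)]{DSS}. The one subtlety to monitor is the order of quantifiers: the finite set $F$ is selected once from $V$, while the auxiliary integer $m$ is permitted to depend on the particular combination under test, which is legitimate since $\span{A\setminus F}\subset V$ is verified element by element.
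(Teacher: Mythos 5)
Your proof is correct, but there is nothing in the paper to compare it against: the statement is imported verbatim as a \emph{Fact} from \cite[Proposition 10.1]{DSS}, and the paper gives no proof of it. So your argument should be judged as a self-contained reconstruction, and as such it holds up. The two easy directions are handled exactly as one would expect: (i)$\Rightarrow$(ii) because $\kal{A}$ is the restriction of $\lkal{A}$ to the topological subgroup $\Z^{(A)}$ of $\R^{(A)}$, and (iii)$\Rightarrow$(i) by splitting a finitely supported tuple into its $F$-part (controlled by continuity of the finite-dimensional map $\R^F\to L$) and its remainder (controlled by $\span{A\setminus F}\subset V'$). Your key step (ii)$\Rightarrow$(iii) is also sound, and you have correctly identified where the real content lies: continuity of $\kal{A}$ at $0$, via discreteness of $\Z$, yields only $\hull{A\setminus F}\subset V$, and your rounding-and-rescaling device (writing $\theta_a=\frac{w_a}{m}+\frac{\eta_a}{m}$ with $|\eta_a|\le\frac12$, using that $V_2$ is balanced for the first term and that $\frac{\eta_a}{m}\to 0$ for the second) legitimately upgrades this to $\span{A\setminus F}\subset V$. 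This is precisely the bridge the paper itself attributes to \cite[Proposition 9.2 (i)]{DSS} in its Remark: in \cite{DSS} absolute Cauchy summability is defined by the subgroup condition in an arbitrary abelian topological group, and that proposition shows the subgroup and span formulations coincide in a topological vector space, which is what makes \cite[Proposition 10.1]{DSS} work. Your quantifier bookkeeping is also right: $F$ depends only on $V$, while $m$ may depend on the particular element tested, since $\span{A\setminus F}\subset V$ is verified pointwise. Two cosmetic simplifications: in (iii)$\Rightarrow$(i) balancedness of $V'$ is not needed, and in (ii)$\Rightarrow$(iii) all three terms $\sum_{a}z_aa$, $\frac1m\sum_{a}w_aa$, $\sum_{a}\frac{\eta_a}{m}a$ land in $V_2$, so a single balanced $V_2$ with $V_2+V_2+V_2\subset V$ suffices in place of the nested pair $V_1,V_2$.
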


\begin{lemma}\label{fact:}\label{facta:}
Let $A$ be a {non-empty} subset of a topological vector space. The following conditions are equivalent:
\begin{itemize}
    \item[(i)] the linear Kalton map $\lkal{A}$ is an open injection onto ${\span{A}}$; 
    \item[(ii)]
     the set $A$ is semi-basic.
\end{itemize}

\end{lemma}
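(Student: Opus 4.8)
The plan is to recognize both conditions as statements about the \emph{coefficient functionals} of $A$ and thereby reduce the equivalence to a standard fact about linear functionals on topological vector spaces. Throughout, $0\notin A$ (as is required for $\lkal{A}$ to be defined, and as both (i) and (ii) force anyway). First I would observe that in either direction $A$ is linearly independent: if (i) holds this is immediate from injectivity of $\lkal{A}$ (recall the Kalton map is injective iff $A$ is linearly independent), while if (ii) holds then the inclusion $\span{A\setminus\{a\}}\subseteq\overline{\span{A\setminus\{a\}}}$ shows that no $a\in A$ lies in $\span{A\setminus\{a\}}$, which is exactly linear independence. Hence in both cases $\lkal{A}$ is a bijection of $\R^{(A)}$ onto $\span{A}$, and for each $a\in A$ the coefficient functional $c_a\colon\span{A}\to\R$, sending the (uniquely written) element $\sum_b r_b b$ to $r_a$, is a well-defined linear functional with $\ker c_a=\span{A\setminus\{a\}}$ and $c_a(a)=1$.

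The key reduction is to translate openness into continuity of the $c_a$. Since $\R^{(A)}$ carries the subspace topology inherited from the Tychonoff product $\R^A$, a map into $\R^{(A)}$ is continuous if and only if all its compositions with the coordinate projections $\R^A\to\R$ are continuous. Applying this to the inverse bijection $\lkal{A}^{-1}\colon\span{A}\to\R^{(A)}$, whose $a$-th coordinate is precisely $c_a$, I get that $\lkal{A}^{-1}$ is continuous iff every $c_a$ is continuous. Because $\lkal{A}$ is a bijection, its openness is the same as continuity of $\lkal{A}^{-1}$; thus condition (i) holds \emph{if and only if every coefficient functional $c_a$ is continuous on $\span{A}$}.

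Finally I would invoke the standard fact that a nonzero linear functional on a (Hausdorff) topological vector space is continuous if and only if its kernel is closed, equivalently not dense; here $\span{A}$ is such a space as a subspace of $L$, and each $c_a\neq0$. Thus $c_a$ is continuous iff the hyperplane $\ker c_a=\span{A\setminus\{a\}}$ is closed in $\span{A}$. Since this kernel has codimension one in $\span{A}$, its closure in $\span{A}$ is either itself or all of $\span{A}$; using that the closure taken within $\span{A}$ equals $\overline{\span{A\setminus\{a\}}}\cap\span{A}$, and that $a\in\span{A}$, closedness of the kernel is equivalent to $a\notin\overline{\span{A\setminus\{a\}}}$, i.e.\ to the semi-basic condition at $a$. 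Quantifying over all $a\in A$, continuity of all $c_a$ is exactly the statement that $A$ is semi-basic, which combined with the previous paragraph gives (i) $\Leftrightarrow$ (ii).

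The one point that needs care is the bookkeeping between the closure of $\span{A\setminus\{a\}}$ taken inside the subspace $\span{A}$ and the closure taken in the ambient space $L$: the semi-basic condition is phrased with the ambient closure, so I must pass between the two via the identity that closure in a subspace is the trace of the ambient closure, together with the fact that $a\in\span{A}$. Everything else is routine: that openness of a bijection is continuity of its inverse, that continuity into a product subspace is coordinatewise, and the cited functional fact.
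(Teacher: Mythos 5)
Your proof is correct and takes essentially the same route as the paper: both reduce (i) and (ii) to continuity of the coefficient maps on $\span{A}$ (your functionals $c_a$ are the paper's projections $\pi^A_a$ up to identifying $\linespan{a}$ with $\R$), and both then use the closed-kernel criterion for continuity of a linear functional together with the subspace-versus-ambient closure bookkeeping you describe. The only difference is that the paper cites \cite[Proposition 10.2]{DSS} for the equivalence of openness of $\lkal{A}$ with continuity of these coordinate maps, whereas you derive it directly from the initiality of the subspace-of-product topology on $\R^{(A)}$.
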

\begin{proof}
Observe that from both items (i) and (ii) it follows that $A$ is linearly independent. Therefore, if we assume either (i) or (ii), then for each $a\in A$ there is a unique {linear} projection   $\pi^A_a:{\span{A}\to\linespan{a}}$ such that $\ker\pi^A_a={\span{A\setminus\{a\}}}$ and $\pi^A_a$ restricted to ${\linespan{a}}$ is the identity map.

To end the proof it suffices to show that items (i) and (ii) are  both equivalent to the following fact for a linearly independent set $A$:
\begin{equation}\label{eq:proj:A:a}
\mbox{ $\pi^A_a:{\span{A}\to\linespan{a}}$ is continuous for every $a\in A$. }    
\end{equation}

The equivalence of (i)  and \eqref{eq:proj:A:a} follows from \cite[Proposition 10.2]{DSS}. To establish the equivalence of (ii) and \eqref{eq:proj:A:a} it suffices to realize that the  continuity of each $\pi^A_a$ is  equivalent to the fact that each $\ker(\pi^A_a)$ is closed in ${\span{A}}$ and this happens if and only if \eqref{eq:sembasic} holds for all $a\in A$.
\end{proof}

\begin{proposition}\label{prop:abs:cauch:+:semibasic=embedding}
Given a subset $A$ of a topological vector space, the following conditions are equivalent:
\begin{itemize}
    \item[(i)] 
    The linear Kalton map $\lkal{A}$ is an embedding of topological vector spaces.
    \item[(ii)]
    $A$ is absolutely Cauchy summable and semi-basic. 
\end{itemize}
\end{proposition}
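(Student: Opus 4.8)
The plan is to reduce an embedding of topological vector spaces to three separate conditions and to recognise each of them among the two properties in (ii) by quoting the results already in hand. Concretely, a linear map is an embedding of topological vector spaces exactly when it is injective, continuous, and open as a map onto its image; for $\lkal A$ the image is $\span{A}$, so I want to match continuity of $\lkal A$ with absolute Cauchy summability of $A$, and the conjunction ``injective and open onto $\span{A}$'' with semi-basicity of $A$.

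Before invoking the cited statements I would clear away the two degenerate configurations, since Fact \ref{fact:cauchy:summable} and Lemma \ref{fact:} are phrased for a nonempty set of nonzero vectors. If $A=\emptyset$ then $\R^{(A)}=\{0\}$ and $\lkal A$ is trivially an embedding, while the empty set is vacuously absolutely Cauchy summable and semi-basic, so (i) and (ii) both hold. If $0_L\in A$, then the standard basis vector of $\R^{(A)}$ indexed by $0_L$ is a nonzero element sent to $0_L$, so $\lkal A$ is not injective and (i) fails; at the same time $0_L\in\span{A\setminus\{0_L\}}$ makes \eqref{eq:sembasic} fail at $0_L$, so $A$ is not semi-basic and (ii) fails. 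In each degenerate case (i) and (ii) have the same truth value, so the equivalence is immediate there.

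In the principal case, where $A$ is a nonempty set of nonzero vectors, I would run the two implications by simply assembling the known results. For (ii)$\Rightarrow$(i): semi-basicity gives, through Lemma \ref{fact:}, that $\lkal A$ is an open injection onto $\span{A}$, and absolute Cauchy summability gives, through Fact \ref{fact:cauchy:summable}, that $\lkal A$ is continuous; a continuous open injective linear map onto $\span{A}$ is a linear bijection onto $\span{A}$ whose inverse is continuous, i.e. a topological vector space isomorphism onto $\span{A}$, which is precisely (i). For (i)$\Rightarrow$(ii): an embedding is in particular continuous, so Fact \ref{fact:cauchy:summable} yields absolute Cauchy summability, and it is in particular an open injection onto $\span{A}$, so Lemma \ref{fact:} yields semi-basicity.

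I do not expect a substantive obstacle, since both halves of the argument are short combinations of Fact \ref{fact:cauchy:summable} and Lemma \ref{fact:}; the proof is really an exercise in matching definitions. The points that call for care are purely formal: pinning down that ``embedding of topological vector spaces'' means exactly ``injective, continuous, and open onto the image,'' checking that the openness supplied by Lemma \ref{fact:} is openness onto the subspace $\span{A}$ (which is what the homeomorphism argument requires, rather than openness into the ambient $L$), and separating off the degenerate cases so that the nonempty, nonzero hypotheses of the quoted results are genuinely satisfied when they are applied.
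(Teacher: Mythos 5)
Your proof is correct and follows essentially the same route as the paper's: both directions are obtained by combining Fact \ref{fact:cauchy:summable} (continuity of $\lkal{A}$ $\Leftrightarrow$ absolute Cauchy summability) with Lemma \ref{fact:} (open injection onto $\span{A}$ $\Leftrightarrow$ semi-basic). Your explicit treatment of the degenerate cases ($A=\emptyset$ or $0_L\in A$) is a point of care that the paper's two-line proof passes over silently, but it does not change the substance of the argument.
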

\begin{proof}
Assume (i). Then $A$ is absolutely Cauchy summable by Fact \ref{fact:cauchy:summable} and semi-basic by Lemma \ref{fact:}. Thus (ii) holds. 

{If (ii) holds, Lemma \ref{fact:} implies, that the linear Kalton map $\lkal{A}$ is open and injective, while Fact \ref{fact:cauchy:summable} provides its continuity. This gives us (i).}
\end{proof}

Our next theorem answers \cite[Question 10.6]{DSS} in positive.
\begin{theorem}\label{lkal:kal:both:embed}
Given a subset $A$ of a topological vector space the following statements are equivalent:
\begin{itemize}
\item[(i)]
The Kalton map $\kal{A}$ is an embedding of topological groups;
\item[(ii)]
The linear Kalton map $\lkal{A}$ is an embedding of topological vector spaces.
\end{itemize}
\end{theorem}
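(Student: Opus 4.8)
The plan is to route the proof through the characterization in Proposition \ref{prop:abs:cauch:+:semibasic=embedding}: the linear Kalton map $\lkal{A}$ is an embedding of topological vector spaces exactly when $A$ is simultaneously absolutely Cauchy summable and semi-basic. I would therefore establish the two implications separately, with essentially all of the work concentrated in (i) $\Rightarrow$ (ii).

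For the easy direction (ii) $\Rightarrow$ (i), I would first observe that $\Z^{(A)}$ carries the topology inherited from $\R^{(A)}$ (for $\varepsilon<1$ the basic product neighbourhoods of $\R^A$ cut out precisely the basic neighbourhoods of the product-of-discrete topology on $\Z^A$), so that $\Z^{(A)}$ is a topological subgroup of $\R^{(A)}$ and $\kal{A}$ is merely the restriction of $\lkal{A}$ to $\Z^{(A)}$. Since the restriction of a topological embedding to a subspace is again a topological embedding onto its image, and $\kal{A}$ is a group homomorphism, (ii) yields (i) with no further argument.

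For the hard direction (i) $\Rightarrow$ (ii), assume $\kal{A}$ is an embedding of topological groups. Injectivity forces $A$ to be $\Z$-independent and $0_L\notin A$ (a zero element or a dependence would collapse two points), placing us in the hypotheses of Fact \ref{fact:cauchy:summable}; continuity of $\kal{A}$ then gives, via that fact, that $A$ is absolutely Cauchy summable. By Proposition \ref{prop:abs:cauch:+:semibasic=embedding} it only remains to prove $A$ is semi-basic, and for this I would appeal to Theorem \ref{thm}: as $A$ is already absolutely Cauchy summable, it suffices to show $A$ is topologically independent.

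Extracting topological independence from the openness of $\kal{A}$ is the heart of the matter and the main obstacle. For each $a\in A$, letting $e_a$ denote the canonical generator of the $a$-th copy of $\Z$, the coordinate map $\rho_a\colon\Z^{(A)}\to\Z^{(A)}$, $(z_b)_b\mapsto z_a e_a$, is continuous, so $P_a=\kal{A}\circ\rho_a\circ\kal{A}^{-1}\colon\hull{A}\to L$ is continuous (using that $\kal{A}$ is a homeomorphism onto $\hull{A}$) and satisfies $P_a\!\left(\sum_b z_b b\right)=z_a a$. The difficulty is that a $W$-witness must control all, possibly infinitely many, coordinates at once, whereas continuity of the $P_a$ only supplies one neighbourhood per index. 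This is exactly where absolute Cauchy summability is indispensable: given a neighbourhood $W$ of $0_L$, I would fix a finite $F_W\subset A$ with $\span{A\setminus F_W}\subset W$, so that for every $a\notin F_W$ the single term $z_a a$ lies in $\span{A\setminus F_W}\subset W$ for free. Only the finitely many coordinates in $F_W$ then require control, so choosing for each $a\in F_W$ a neighbourhood $U_a$ of $0_L$ with $P_a(U_a\cap\hull{A})\subset W$ and setting $U=\bigcap_{a\in F_W}U_a$ produces a genuine neighbourhood of $0_L$. A short case check—coordinates $a\in F\cap F_W$ handled by continuity of $P_a$, coordinates $a\in F\setminus F_W$ by the tail estimate—shows $U$ is a $W$-witness of topological independence. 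Theorem \ref{thm} then delivers semi-basicity, and Proposition \ref{prop:abs:cauch:+:semibasic=embedding} completes the implication.
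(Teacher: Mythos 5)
Your proposal is correct, and its overall architecture coincides with the paper's: the implication (ii)$\Rightarrow$(i) is the same restriction argument, and (i)$\Rightarrow$(ii) runs through exactly the same chain --- absolute Cauchy summability via Fact \ref{fact:cauchy:summable}, then topological independence, then semi-basicity via Theorem \ref{thm}, then Proposition \ref{prop:abs:cauch:+:semibasic=embedding}. The one genuine point of divergence is the step you correctly single out as the heart of the matter: the paper obtains topological independence of $A$ by citing an external result, \cite[Proposition 4.7 (ii)]{DSS}, whereas you prove it inline. Your inline argument is sound: injectivity of $\kal{A}$ gives $0_L\notin A$; the conjugated coordinate maps $P_a=\kal{A}\circ\rho_a\circ\kal{A}^{-1}$ are continuous on $\hull{A}$ precisely because $\kal{A}$ is a homeomorphism onto its image; absolute Cauchy summability supplies a finite $F_W$ with $\span{A\setminus F_W}\subset W$, so that every single term $z_aa$ with $a\notin F_W$ lies in $W$ unconditionally; and the finite intersection $U=\bigcap_{a\in F_W}U_a$ is then a genuine $W$-witness, by the two-case check you describe (coordinates in $F\cap F_W$ via continuity of $P_a$ and injectivity of $\kal{A}$, coordinates in $F\setminus F_W$ via the tail estimate). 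Note that your derivation of topological independence consumes absolute Cauchy summability as an extra hypothesis, while the paper's citation is invoked directly from the embedding hypothesis; in context this costs nothing, since summability is established first in both proofs. The net effect is that your proof is self-contained modulo the results stated in the paper itself, which is a mild improvement in readability over the paper's reliance on the external reference for this key step.
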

\begin{proof}
Since $\kal{A}$ is a restriction of $\lkal{A}$, the implication (ii)$\Rightarrow$(i) follows. 

Assume (i). Then $A$ is absolutely Cauchy summable by Fact \ref{fact:cauchy:summable}. Further, $A$ is topologically independent by \cite[Proposition 4.7 (ii)]{DSS}. Theorem \ref{thm} yields that $A$ is also semi-basic. To show (ii) it remains to apply Proposition~\ref{prop:abs:cauch:+:semibasic=embedding}.
\end{proof}
The next statement is a direct corollary of Theorem \ref{lkal:kal:both:embed}.

 \begin{corollary}\label{cor:main}
For every non-empty set $A$ the topological group $\Z^{(A)}$ has invariant linear span (which is isomorphic to  $\R^{(A)}$).
  \end{corollary}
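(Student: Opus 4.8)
The plan is to reduce the statement to Theorem \ref{lkal:kal:both:embed} by realizing every embedding of $\Z^{(A)}$ into a topological vector space as a Kalton map. First I would record that the group is not vacuously covered by the definition: the canonical inclusion $\Z^{(A)}\hookrightarrow\R^{(A)}$ is a topological embedding, since $\Z$ sits discretely in $\R$, so the product embedding $\Z^A\hookrightarrow\R^A$ restricts to an embedding on the finitely supported elements. Hence $\Z^{(A)}$ does embed into a topological vector space, and the linear span of its image in $\R^{(A)}$ is exactly $\R^{(A)}$.

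Now fix an arbitrary embedding of topological groups $h\colon\Z^{(A)}\to L$ into a topological vector space $L$. For $a\in A$ write $e_a\in\Z^{(A)}$ for the element whose $a$-th coordinate is $1$ and all others $0$, and set $A':=\{h(e_a):a\in A\}\subset L$. Since $h$ is injective and $h(0)=0_L$, we have $0_L\notin A'$, and since $a\mapsto e_a$ is injective the assignment $a\mapsto h(e_a)$ is a bijection of $A$ onto $A'$. This bijection induces a topological group isomorphism $\phi\colon\Z^{(A)}\to\Z^{(A')}$, which is merely a relabeling of coordinates in the product topology.

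The key observation is that $h=\kal{A'}\circ\phi$: because $h$ is a homomorphism, $h\big((z_a)_{a\in A}\big)=\sum_{a\in A}z_a\,h(e_a)$, which under the relabeling is precisely $\kal{A'}\big((z_v)_{v\in A'}\big)$. As $\phi$ is an isomorphism and $h$ is an embedding, the Kalton map $\kal{A'}=h\circ\phi^{-1}$ is an embedding of topological groups. Theorem \ref{lkal:kal:both:embed} then yields that the linear Kalton map $\lkal{A'}\colon\R^{(A')}\to L$ is an embedding of topological vector spaces. Its image is $\span{A'}$, and since $h(\Z^{(A)})$ consists of integer combinations of the vectors $h(e_a)$ we have $\span{h(\Z^{(A)})}=\span{A'}$; thus the linear span of the image of $h$, carrying the subspace topology from $L$, is isomorphic as a topological vector space to $\R^{(A')}$, and hence to $\R^{(A)}$ via $\phi$.

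Because the embedding $h$ and the ambient space $L$ were arbitrary, every linear span of $\Z^{(A)}$ is isomorphic to $\R^{(A)}$, which is exactly the assertion that $\Z^{(A)}$ has invariant linear span isomorphic to $\R^{(A)}$. I do not expect a serious obstacle here, as the heavy lifting is already carried out in Theorem \ref{lkal:kal:both:embed} (and ultimately in the Diophantine argument behind Theorem \ref{thm}); the only points requiring care are verifying that $A'$ is a genuine subset of $L$ with $0_L\notin A'$ and that the coordinate relabeling $\phi$ is a topological isomorphism, both of which are routine.
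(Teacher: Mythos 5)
Your proof is correct and takes the same route the paper intends: the paper derives Corollary \ref{cor:main} as a ``direct corollary'' of Theorem \ref{lkal:kal:both:embed}, and your argument just makes explicit the routine reduction it leaves to the reader, namely that an arbitrary topological group embedding $h\colon\Z^{(A)}\to L$ factors, after relabeling coordinates, as the Kalton map $\kal{A'}$ of the set $A'=\{h(e_a):a\in A\}$, so that the theorem applies and identifies the linear span with $\R^{(A)}$. The verifications you flag (that $0_L\notin A'$, that the relabeling is a topological isomorphism, and that $\Z^{(A)}$ embeds canonically in $\R^{(A)}$) are exactly the right ones and are handled correctly.
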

  
\begin{remark}\label{remark}
Corollary \ref{cor:main} can be viewed as a generalization of the folklore fact that {\em all topological vector spaces of the same finite dimension are isomorphic}. Indeed, if $A$ is a finite basis of a topological vector space $V$, then $A$ is topologically independent by \cite[Proposition 4.11]{DSS}. {It follows then by \cite[Proposition 4.8]{DSS} that the Kalton map $\kal{A}$ is an embedding of topological groups}. That is, the hull $\hull{A}$ is (isomorphic to) $\Z^A$. Hence $V=\span{A}$ is (isomorphic to) $\R^A$.

\end{remark}

We end this paper with a theorem which provides a rich source of examples of topological groups that embed in topological vector spaces and do not have invariant linear spans.

Given a Tychonoff space $X$ the symbols $A(X)$, $L(X)$ and $V(X)$ stand for the free abelian topological group, the free locally convex topological vector space and the free topological vector space over $X$ respectively.  We refer {the} reader to \cite{GM} for definitions of these notions. 

\begin{theorem}\label{proposition:tkachenko:Morris:Gabriyelyan}
Let $X$ be a Tychonoff space. The topological group $A(X)$ canonically embeds in the topological vector spaces $L(X)$ and $V(X)$. If $X$ is sequential and non-discrete, then $A(X)$ does not have an invariant linear span. 
\end{theorem}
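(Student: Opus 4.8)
The plan is to prove the two assertions separately, relying on the cited structural results about free objects. The first assertion---that $A(X)$ canonically embeds into both $L(X)$ and $V(X)$---is where I would lean directly on the literature. The free abelian topological group $A(X)$ carries a universal property with respect to continuous maps of $X$ into abelian topological groups, while $L(X)$ and $V(X)$ carry analogous universal properties into locally convex (resp. arbitrary) topological vector spaces. Each vector space is in particular an abelian topological group, so the universal map $X \to L(X)$ (resp. $X \to V(X)$) factors through a canonical continuous homomorphism $A(X) \to L(X)$ (resp. $A(X) \to V(X)$). The content is that these canonical homomorphisms are \emph{embeddings}: this is precisely the non-trivial part and is exactly what the results of Gabriyelyan and Morris \cite{GM} (and Tkachenko \cite{Tka}) supply. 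So my first step would be to quote the precise statements from \cite{GM} identifying $A(X)$ with its image in both $L(X)$ and $V(X)$ as a topological subgroup, and to check that in each case the image is the linear span (i.e.\ that $A(X)$ generates $L(X)$ and $V(X)$ as topological vector spaces, which follows since $X$ generates each free space).

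The second assertion is the real payoff. To show $A(X)$ does not have invariant linear span, I would exhibit two embeddings of $A(X)$ into topological vector spaces whose linear spans are non-isomorphic. The two embeddings are exactly the canonical ones from the first part: the linear span of $A(X)$ inside $L(X)$ is $L(X)$ itself, and the linear span inside $V(X)$ is $V(X)$ itself. Thus invariance of the linear span would force $L(X) \cong V(X)$ as topological vector spaces. The plan is therefore to invoke the theorem of Gabriyelyan and Morris \cite{GM} asserting that for $X$ sequential and non-discrete the spaces $L(X)$ and $V(X)$ are \emph{not} isomorphic as topological vector spaces (indeed $V(X)$ is not locally convex in this case, whereas $L(X)$ is locally convex by construction). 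Combining this non-isomorphism with the two embeddings immediately contradicts the definition of invariant linear span, completing the proof.

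The main obstacle is essentially bibliographic rather than computational: the entire weight of the second assertion rests on the deep result that $L(X) \not\cong V(X)$ for non-discrete sequential $X$. I would need to state this carefully, since local convexity alone gives the cleanest route---$L(X)$ is locally convex by its very definition, and one must cite the precise place in \cite{GM} establishing that $V(X)$ fails to be locally convex (equivalently, that the canonical continuous linear surjection $V(X) \to L(X)$ is not a topological isomorphism) precisely when $X$ is non-discrete and sequential. Provided that citation is in hand, the argument is short; the subtlety lies in making sure the hypotheses (sequentiality and non-discreteness) are used only through that external theorem and that both linear spans are genuinely realized as $L(X)$ and $V(X)$, which is guaranteed because $X$ topologically generates each free space.
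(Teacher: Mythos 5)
Your proposal is correct and follows essentially the same route as the paper: both establish the two canonical embeddings (the paper cites Tkachenko's Theorem~3 for $L(X)$ and Gabriyelyan--Morris Proposition~5.1 for $V(X)$, each of which also identifies the linear span as the whole free space), and then derive the contradiction from the fact that $L(X)\cong V(X)$ with $X$ sequential forces $X$ discrete (the paper quotes this as \cite[Corollary 4.5]{GM}, where your local-convexity reasoning is precisely the mechanism behind that corollary). The only cosmetic difference is that you phrase the final citation in terms of $V(X)$ failing to be locally convex, while the paper invokes the packaged non-isomorphism statement directly.
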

\begin{proof}
By \cite[Theorem 3]{Tka} the topological group $A(X)$ embeds in $L(X)$ and the linear span of $A(X)$ is $L(X)$. On the other hand, by \cite[Proposition 5.1]{GM}, it also embeds in $V(X)$ and its linear span in $V(X)$ is $V(X)$. Finally, if $L(X)$ and $V(X)$ are isomorphic as topological vector spaces and $X$ is sequential, then $X$ is discrete by \cite[Corollary 4.5]{GM}.   
\end{proof}

\end{document}